\newtheorem{theorem}{Theorem}[section]
\newtheorem{lemma}[theorem]{Lemma}
\newtheorem{proposition}[theorem]{Proposition}
\newtheorem{corollary}[theorem]{Corollary}
\theoremstyle{definition}
\newtheorem{definition}[theorem]{Definition}
\newtheorem{example}[theorem]{Example}
\theoremstyle{remark}
\newtheorem{remark}[theorem]{Remark}
\numberwithin{equation}{section}
\begin{document}

\title{The log-Brunn-Minkowski inequality in $\mathbb{R}^3$}

\author{Yunlong Yang}
\address{School of Science, Dalian Maritime University,
Dalian, 116026,  People's Republic of China}
\email{ylyang@dlmu.edu.cn}

\author{Deyan Zhang}
\address{School of Mathematical Sciences,
Huaibei Normal University, Huaibei, 235000, People's Republic of China}
\email{zhangdy8005@126.com}
\thanks{The first author was supported in part by the Doctoral Scientific
Research Foundation of Liaoning Province (No.20170520382) and the Fundamental Research
Funds for the Central Universities (No.3132017046).}
\thanks{The second author is the corresponding author
and was supported in part by the National Natural Science Foundation of China
(No.11671298, 11561020) and is partly supported by the Key Project of
Natural Science Research in Anhui Province (No.KJ2016A635).}
\subjclass[2010]{Primary 52A40; Secondary 52A15}



\keywords{Cone-volume measure, constant width bodies,
log-Brunn-Minkowski's inequality, log-Minkowski's inequality,
$L_p$-Brunn-Minkowski's inequality, $L_p$-Minkowski inequality,
$\mathfrak{R}_i$ class.}

\begin{abstract}
B\"{o}r\"{o}czky, Lutwak, Yang and Zhang recently proved the
log-Brunn-Minkowski inequality which is stronger than the
classical Brunn-Minkowski inequality for two origin-symmetric
convex bodies in the plane. This paper establishes the
log-Brunn-Minkowski, log-Minkowski, $L_p$-Minkowski
and $L_p$-Brunn-Minkowski inequalities
for two convex bodies in $\mathbb{R}^3$.
\end{abstract}

\maketitle

\section{Introduction}\label{sec1}

Let $\mathcal{K}^n$ be the set of all convex bodies, i.e., compact convex
sets with non-empty interior, in the $n$ dimensional Euclidean space
$\mathbb{R}^n$, and let $\mathcal{K}_0^n$ be the class of
members of $\mathcal{K}^n$ containing the origin in their interiors.
For two convex bodies $K,\,L\in \mathcal{K}^n$,
the classical Brunn-Minkowski inequality states that
\begin{equation*}
    V((1-\lambda)K+\lambda L)^{\frac{1}{n}}\ge (1-\lambda)V(K)^{\frac{1}{n}}
    +\lambda V(L)^{\frac{1}{n}},
\end{equation*}
with equality if and only if $K$ and $L$ are homothetic,
where $(1-\lambda)K+\lambda L=\{(1-\lambda)x+ \lambda y\mid x\in K,\, y\in L\}$
is the Minkowski combination of $K$ and $L$, and $V(\cdot)$ denotes the
$n$-dimensional volume (i.e. Lebesgue measure) functional.
The Brunn-Minkowski inequality is an extremely powerful
tool and plays a significant role in convex geometry,
its various aspects can be found in Gardner's article
\cite{G2002} and Schneider's monograph \cite{S2014}.

In the early 1960s, Firey \cite{F1962} (see also \cite{S2014})
generalized the Minkowski combination of convex bodies to the
$L_p$-Minkowski combination for each $p\geq 1$. In the 1999s,
Lutwak \cite{L1993,L1996} showed that many classical results
can be extended to the $L_p$ Brunn-Minkowski-Firey theory.
Recently, B\"{o}r\"{o}czky et al. \cite{B-L-Y-Z2012} extended
the $L_p$-Minkowski combination to $p>0$, that is,
\begin{equation*}
(1-\lambda)\cdot K +_{p}\, \lambda\cdot L
=\bigcap_{u\in S^{n-1}}\left\{x\in \mathbb{R}^n\mid x\cdot u\leq\big((1-\lambda)h_K(u)^p+\lambda h_L(u)^p\big)^{\frac{1}{p}}\right\},
\end{equation*}
where $h_K$ and $h_L$ are the support functions of $K$
and $L$. When $0<p<1$, the function $((1-\lambda)h_K^p+\lambda h_L^p)^{\frac{1}{p}}$
is not necessary the support function of the convex body
$(1-\lambda)\cdot K +_{p}\, \lambda\cdot L$, and
$(1-\lambda)\cdot K +_{p}\, \lambda\cdot L$ is the Wulff shape of
the function $((1-\lambda)h_K^p+\lambda h_L^p)^{\frac{1}{p}}$.
This is different from the case $p\ge 1$.
The limiting case of $p\rightarrow 0$
is the log Minkowski combination $(1-\lambda)\cdot K+_{o} \lambda \cdot L$,
which is defined by B\"{o}r\"{o}czky et al. in \cite{B-L-Y-Z2012}, that is,
\begin{equation*}
(1-\lambda)\cdot K+_{o} \lambda \cdot L=\bigcap_{u\in S^{n-1}}
\left\{x\in \mathbb{R}^n\mid x\cdot u\leq h_K(u)^{1-\lambda}h_L(u)^{\lambda}\right\}.
\end{equation*}

For two origin-symmetric convex bodies $K$ and $L$,
B\"{o}r\"{o}czky et al. \cite{B-L-Y-Z2012} conjectured that
the following log-Minkowski inequality holds
\begin{equation}\label{eqn1.1}
\int_{S^{n-1}}\log\frac{h_L}{h_K}\mathrm{d}\bar{V}_K
\geq \frac{1}{n}\log\frac{V(L)}{V(K)},
\end{equation}
where $\mathrm{d}\bar{V}_K$ is the cone-volume probability measure
of $K$ and showed that it is equivalent to the
log-Brunn-Minkowski inequality 
\begin{equation}\label{eqn1.2}
\mathrm{V}\big((1-\lambda)\cdot K+_{\circ} \lambda \cdot L\big)
\geq
\mathrm{V}(K)^{1-\lambda}\mathrm{V}(L)^{\lambda},
\quad \text{for $\lambda\in [0, 1]$.}
\end{equation}

B\"{o}r\"{o}czky et al. \cite{B-L-Y-Z2012} solved the planar case
for \eqref{eqn1.1} and \eqref{eqn1.2} and obtained the equalities
hold if and only if $K$ and $L$ are dilates or $K$ and $L$
are parallelograms with parallel sides. Ma \cite{M2015} gave an
alternative proof of \eqref{eqn1.1} for the case $n=2$. Saroglou
\cite{S2015} established \eqref{eqn1.2} together with its
equality cases for pairs of convex bodies that are both
unconditional with respect to some orthonormal basis.
Stancu \cite{S2016} showed some variants of the logarithmic
Minkowski inequality for general convex bodies and
obtained some special cases for the equality holds in
\eqref{eqn1.1} without the symmetric assumption.
Xi and Leng \cite{X-L2016} solved Dar's conjecture in the plane
and built the relationship between the log-Brunn-Minkowksi
inequality and Dar's conjecture in the plane when convex
bodies are at a dilation position.
To conclude \eqref{eqn1.1}, B\"{o}r\"{o}czky et al. \cite{B-L-Y-Z2012}
researched the uniqueness question of the cone-volume
measure for origin-symmetric convex bodies in the plane.
For the uniqueness of cone-volume measures,
Gage \cite{G1993} showed that within the class of
origin-symmetric planar convex bodies that are also smooth
and have positive curvature, the cone-volume measure
determines the convex body uniquely. For even discrete measures,
Stancu \cite{S2002,S2003} treated the
uniqueness question for the log-Minkowski problem
in the plane. There are many contexts in which
cone-volume measures play a significant role, see e.g.,
\cite{B-H2016,B-L-Y-Z2013,G-L1994,H-L2014,H-L-L2006,H-S-W2005,X2010,Z2014} etc..
Recently, a more comprehensive account of various aspects of the log-Brunn-Minkowski
inequality can be found in Colesanti-Livshyts-Marsiglietti \cite{C-L-M2017},
Colesanti-Livshyts \cite{C-L2017}, Kolesnikov-Milman \cite{K-M2017} and Rotem \cite{R2014}.


This paper is organized as follows. In Sect. \ref{sec2}, we present
some concepts and basic results about convex bodies. In Sect. \ref{sec3},
motivated by the idea of B\"{o}r\"{o}czky-Lutwak-Yang-Zhang
\cite{B-L-Y-Z2012} and Ma \cite{M2015}, we prove the log-Minkowski
inequality and log-Brunn-Minkowski inequality
when convex bodies are in $\mathfrak{R}_1$ class. For
the same convex bodies, the $L_p$-Minkowski inequality and
$L_p$-Brunn-Minkowski inequality are obtained  when $0<p<1$.
In Sect. \ref{sec4}, we show that there are convex bodies
such that they satisfy the condition of $\mathfrak{R}_i$ class.

\section{Preliminaries}\label{sec2}

In this section, we review some basic notations and definitions in
convex geometry. Good general references for the theory of convex
bodies are provided by the books of Gardner \cite{G2002}, Gruber
\cite{G2007}, Schneider \cite{S2014} and Thompson \cite{T1996}.

If $K\in \mathcal{K}^n$, its {\it support function}
$h_K:\mathbb{R}^n\rightarrow \mathbb{R}$
is defined by
\begin{equation*}
    h_K(x)=\max\{x\cdot y\mid y\in K\}.
\end{equation*}

Let $K\in \mathcal{K}^n$. The {\it surface area measure} $S_K$
of $K$ is a Borel measure on $S^{n-1}$ defined for a Borel set
$\omega\subseteq S^{n-1}$ by
\begin{equation*}
    S_K(\omega)=\mathcal{H}^{n-1}(\nu_K^{-1}(\omega)),
\end{equation*}
where $\nu_K:\,\partial' K\rightarrow S^{n-1}$ is the Gauss map of $K$,
defined on $\partial' K$, the set of points of $\partial K$ that
have a unique outer unit normal, and $\mathcal{H}^{n-1}$ is the
$(n-1)$-dimensional Hausdorff measure.

For $K\in \mathcal{K}_0^n$, its {\it cone-volume measure}
$V_K$ is a Borel measure on the unit sphere $S^{n-1}$ defined for
a Borel set $\omega\subseteq S^{n-1}$ by
\begin{equation*}
    V_K(\omega)=\frac{1}{n}\int_{x\in \nu_K^{-1}(\omega)}
    x\cdot \nu_K(x) \mathrm{d}\mathcal{H}^{n-1}(x)
\end{equation*}
and thus
\begin{equation*}
    \mathrm{d}V_K=\frac{1}{n} h_K \mathrm{d}S_K.
\end{equation*}
Since,
\begin{equation}\label{eqn2.1}
    V(K)=\frac{1}{n}\int_{ S^{n-1}}h_K(u) \mathrm{d}S_K(u),
\end{equation}
the {\it cone-volume probability measure} $\bar{V}_K$ of $K$
is given by
\begin{equation*}
    \bar{V}_K=\frac{1}{V(K)}V_K.
\end{equation*}


Let $K\in\mathcal{K}^n$, $L\in\mathcal{K}_{0}^n$. For $x\in K$, set
\begin{equation*}
    r_x(K,L)=\max\{t\ge 0\mid x+ tL\subseteq K\}
\end{equation*}
and
\begin{equation*}
    R_x(K,L)=\min\{t> 0\mid x+ tL\supseteq K\}.
\end{equation*}
Specially, for $K,\,L \in \mathcal{K}_0^n$ and the origin $o$, we have
\begin{equation}\label{eqn2.2}
    r_o(K,L)=\min_{u\in S^{n-1}} \frac{h_K(u)}{h_L(u)}
\end{equation}
and
\begin{equation}\label{eqn2.3}
    R_o(K,L)=\max_{u\in S^{n-1}} \frac{h_K(u)}{h_L(u)}.
\end{equation}
Obviously, from the above expressions, it follows that
\begin{equation*}
    r_o(K,L)=\frac{1}{R_o(L,K)}.
\end{equation*}

For $K,\,L\in \mathcal{K}^n$, the relative Steiner  formula states
that the volume of the outer parallel body of $K$ with respect to
$L$, $K+tL$, is a polynomial of degree
$n$ in $t\ge 0$,
\begin{equation}\label{eqn2.4}
    V(K+t L)=\sum_{i=0}^n {n\choose i }W_i(K,L) t^i.
\end{equation}
The coefficients $W_i(K,L)$ are called {\it relative quermassintegrals}
of $K$ with respect to $L$, and they are a special case of the general
defined {\it mixed volumes} for which we refer to \cite[Ch.5.1]{S2014}.
In particular, we have $W_0(K,L)=V(K)$, $W_n(K,L)=V(L)$
and $W_i(K,L)=W_{n-i}(L,K)$. When $n=3$, $W_1(K,L)$ and
$W_2(K,L)$ can be expressed by (see \cite[(5.34)]{S2014})
\begin{align}
    &W_1(K,L)=\frac{1}{3}\int_{S^2}h_L(u) \mathrm{d}S_K(u),\label{eqn2.5}\\
    &W_2(K,L)=\frac{1}{3}\int_{S^2}h_K(u) \mathrm{d}S_L(u).\label{eqn2.6}
\end{align}

Analogous formulae to \eqref{eqn2.4} give us the value of the relative
$i$-th quermassintegral of $K+tL$, namely
\begin{equation}\label{eqn2.7}
    W_i(K+t L,L)=\sum_{k=0}^{n-i} {n-i\choose k}W_{i+k}(K,L) t^k,
\end{equation}
for $t\ge 0$ and $i=0,\cdots,n$.

To introduce the convex bodies that are in $\mathcal{R}_i$ class,
for $K,\,L\in \mathcal{K}^n$, we consider the {\it $i$-th
relative Bonnesen function}
\begin{equation}\label{eqn2.8}
    \mathcal{B}_{i;K,L}(r)=2W_{i+1}(K,L)r-W_i(K,L)-W_{i+2}(K,L)r^2.
\end{equation}

Next, we give a proposition about the $i$-th
relative Bonnesen function $\mathcal{B}_{i;K,L}(r)$.

\begin{proposition}\label{pro2.1}
Let $K\in \mathcal{K}^3$, $L\in \mathcal{K}_0^3$. If $K_t=K+tL\,(t\ge 0)$ are outer parallel bodies
of $K$ with respect to $L$,
then, for $i=0,1,\cdots,n-2$,
\begin{equation}\label{eqn3.2}
    \mathcal{B}_{i;K_t,L}(r+t)=\sum_{k=0}^{n-i-2} {n-i-2 \choose k} \mathcal{B}_{i+k;K,L}(r)t^k.
\end{equation}
\end{proposition}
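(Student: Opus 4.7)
The plan is to fix $r \ge 0$, regard both sides of \eqref{eqn3.2} as polynomials in $t$, derive a first-order recursion relating $\mathcal{B}_{i;K_t,L}(r+t)$ to $\mathcal{B}_{i+1;K_t,L}(r+t)$, and then conclude by a finite Taylor expansion at $t = 0$. The only consequence of \eqref{eqn2.7} that I need is obtained by differentiating that polynomial identity in $t$ and re-indexing, which gives
\begin{equation*}
    \frac{d}{dt} W_j(K_t,L) = (n-j)\, W_{j+1}(K_t,L), \qquad j = 0, 1, \ldots, n-1.
\end{equation*}

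Set $F_i(t) := \mathcal{B}_{i;K_t,L}(r+t)$. Applying the product rule to each of the three summands of \eqref{eqn2.8} and using the displayed formula above, I expect the contributions containing $W_{i+1}(K_t,L)$ to combine with coefficient $-(n-i)+2 = -(n-i-2)$, those containing $W_{i+2}(K_t,L)(r+t)$ to combine with coefficient $2(n-i-1)-2 = 2(n-i-2)$, and the $W_{i+3}(K_t,L)(r+t)^2$ term to carry the factor $-(n-i-2)$ automatically. These three simultaneous cancellations collapse the derivative to the clean recursion
\begin{equation*}
    F_i'(t) = (n-i-2)\, F_{i+1}(t).
\end{equation*}

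Iterating this recursion yields $F_i^{(k)}(t) = \frac{(n-i-2)!}{(n-i-k-2)!}\, F_{i+k}(t)$ for $0 \le k \le n-i-2$, and $F_i^{(k)} \equiv 0$ for larger $k$ since the telescoping product acquires a factor of $0$ at step $k = n-i-1$. Because $K_0 = K$, evaluation at $t = 0$ gives $F_i^{(k)}(0) = \frac{(n-i-2)!}{(n-i-k-2)!}\,\mathcal{B}_{i+k;K,L}(r)$, and the finite Taylor series of $F_i$ at the origin, together with the identity $\frac{1}{k!}\frac{(n-i-2)!}{(n-i-k-2)!} = \binom{n-i-2}{k}$, is exactly the right-hand side of \eqref{eqn3.2}. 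The only place where anything non-trivial happens is the cancellation exposing the common factor $(n-i-2)$ in the derivative computation; once that coincidence is verified, the rest of the argument is routine Taylor bookkeeping.
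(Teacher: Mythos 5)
Your proof is correct, but it takes a genuinely different route from the paper's. The paper argues by direct substitution: it plugs the expansion \eqref{eqn2.7} of each $W_j(K+tL,L)$ into the definition \eqref{eqn2.8} of $\mathcal{B}_{i;K_t,L}(r+t)$, collects the resulting binomial sums into two auxiliary quantities $A$ and $B$, and simplifies them by re-indexing until the right-hand side emerges. You instead differentiate: from \eqref{eqn2.7} you extract the derivative rule $\frac{\mathrm{d}}{\mathrm{d}t}W_j(K_t,L)=(n-j)W_{j+1}(K_t,L)$, verify the three cancellations giving the recursion $F_i'(t)=(n-i-2)F_{i+1}(t)$ for $F_i(t)=\mathcal{B}_{i;K_t,L}(r+t)$ (your coefficients $2-(n-i)$, $2(n-i-1)-2$ and $-(n-i-2)$ all check out), and then recover the binomial coefficients from the finite Taylor expansion at $t=0$. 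Both arguments are complete; yours is the more structural one, since it exhibits the family $\mathcal{B}_{i;K_t,L}$ as Appell-like under $\mathrm{d}/\mathrm{d}t$ and thereby explains where the binomial coefficients come from, while the paper's is self-contained at the level of finite sums and requires no differentiation. One cosmetic point worth a sentence in a final write-up: when $i=n-2$ your derivative computation formally produces the undefined symbol $W_{n+1}$, but it appears only with the factor $n-i-2=0$ (equivalently, $W_n(K_t,L)=V(L)$ is constant in $t$), so nothing breaks.
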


\begin{proof}
It follows from \eqref{eqn2.7} and \eqref{eqn2.8} that
\begin{align*}
    \mathcal{B}_{i;K_t,L}(r)=&2W_{i+1}(K+t L,L)r-W_i(K+tL,L)-W_{i+2}(K+t L,L)r^2\\
    =&2\sum_{k=0}^{n-i-1} {n-i-1 \choose k} W_{i+k+1}(K,L) t^k r-
    \sum_{k=0}^{n-i} {n-i \choose k} W_{i+k}(K,L) t^k\\
    &-\sum_{k=0}^{n-i-2} {n-i-2 \choose k} W_{i+k+2}(K,L)t^k r^2.
\end{align*}
Set
\begin{align*}
   A=&\sum_{k=0}^{n-i-1}{n-i-1 \choose k} W_{i+k+1}(K,L)t^k
    -\sum_{k=0}^{n-i-2}{n-i-2 \choose k} W_{i+k+2}(K,L)t^{k+1},\\
   B=&2\sum_{k=0}^{n-i-1}{n-i-1 \choose k}W_{i+k+1}(K,L)t^{k+1}
    -\sum_{k=0}^{n-i}{n-i \choose k}W_{i+k}(K,L)t^{k}\\
    &-\sum_{k=0}^{n-i-2}{n-i-2 \choose k}W_{i+k+2}(K,L)t^{k+2},
\end{align*}
then
\begin{align*}
    \mathcal{B}_{i;K_t,L}(r+t)
    =&2\sum_{k=0}^{n-i-1} {n-i-1 \choose k} W_{i+k+1}(K,L) t^k (r+t)-
    \sum_{k=0}^{n-i} {n-i \choose k} W_{i+k}(K,L) t^k\\
    &-\sum_{k=0}^{n-i-2} {n-i-2 \choose k} W_{i+k+2}(K,L)t^k(r+t)^2\\
    =&2Ar+B-\sum_{k=0}^{n-i-2}{n-i-2 \choose k} W_{i+k+2}(K,L)t^k r^2.
\end{align*}
The expressions $A$ and $B$ can be simplified as
\begin{align*}
    A&=\sum_{k=0}^{n-i-2}{n-i-2 \choose k} W_{i+k+1}(K,L)t^k,\\
    B&=-\sum_{k=0}^{n-i-2}{n-i-2 \choose k} W_{i+k}(K,L)t^{k},
\end{align*}
hence,
\begin{align*}
    \mathcal{B}_{i;K_t,L}(r+t)&=\sum_{k=0}^{n-i-2}{n-i-2 \choose k}\left(2W_{i+k+1}(K,L)r-W_{i+k}(K,L)-W_{i+k+2}(K,L)r^2\right)t^k\\
    &=\sum_{k=0}^{n-i-2}{n-i-2 \choose k} \mathcal{B}_{i+k;K,L}(r)t^k,
\end{align*}
which completes the proof.
\end{proof}

For convex bodies $K,\,L\in \mathcal{K}^3$, in order to
research the log-Brunn-Minkowski inequality and the
log-Minkowski inequality in $\mathbb{R}^3$, we give
the definitions that convex bodies are in
$\mathfrak{R}_1$ class and
$\mathfrak{R}_2$ class.

\begin{definition}\label{def2.2}
Let $K,\,L\in \mathcal{K}^3$. The convex body $K$ is in
$\mathfrak{R}_1$ class with respect to $L$
if the origin $o\in \mathrm{int} (K\cap L)$ such that
$$\mathcal{B}_{0;K,L}(r)\ge 0,\quad r\in[r_o(K,L),R_o(K,L)];$$
The convex body $K$ is in $\mathfrak{R}_2$ class with respect to
$L$ if the origin $o\in \mathrm{int} (K\cap L)$ such that
$$\mathcal{B}_{1;K,L}(r)\ge 0,\quad r\in[r_o(K,L),R_o(K,L)].$$
\end{definition}

From Definition \ref{def2.2} and Proposition \ref{pro2.1},
we have the following corollary.

\begin{corollary}\label{cor2.3}
Let $K,\,L\in \mathcal{K}^3$.
\begin{itemize}
  \item [(i)] If $K$ is in $\mathfrak{R}_1$ class
    with respect to $L$, then $L$ is in $\mathfrak{R}_2$ class
    with respect to $K$.
  \item[(ii)] If $K$ is in $\mathfrak{R}_2$ class
    with respect to $L$, then $L$ is in $\mathfrak{R}_1$ class
    with respect to $K$.
  \item [(iii)] If $K$ is in $\mathfrak{R}_1$ class as well as in $\mathfrak{R}_2$ class
    with respect to $L$, then $K+tL$ is in $\mathfrak{R}_1$ class
    with respect to $L$.
  \item [(iv)] If $K$ is in $\mathfrak{R}_2$ class
    with respect to $L$, then $K+tL$ is also in $\mathfrak{R}_2$ class
    with respect to $L$.
\end{itemize}
\end{corollary}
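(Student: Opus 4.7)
The plan is to exploit two structural facts. First, the mixed volume symmetry $W_i(K,L) = W_{n-i}(L,K)$, which for $n=3$ reads $W_0(K,L)=W_3(L,K)$ and $W_1(K,L)=W_2(L,K)$ (and their transposes). Second, the reciprocal identities $r_o(L,K)=1/R_o(K,L)$ and $R_o(L,K)=1/r_o(K,L)$ coming from \eqref{eqn2.2}, \eqref{eqn2.3}. Together these let me convert Bonnesen polynomials between the pairs $(K,L)$ and $(L,K)$. For parts (iii) and (iv), the crucial tool is the recursion in Proposition~\ref{pro2.1}.

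For (i), I start from the expanded form
\[
\mathcal{B}_{1;L,K}(s) = 2W_2(L,K)s - W_1(L,K) - W_3(L,K)s^2 = 2W_1(K,L)s - W_2(K,L) - W_0(K,L)s^2
\]
and observe that this equals $s^{2}\,\mathcal{B}_{0;K,L}(1/s)$. Since the map $s\mapsto 1/s$ carries $[r_o(L,K), R_o(L,K)]$ bijectively onto $[r_o(K,L), R_o(K,L)]$, nonnegativity of $\mathcal{B}_{0;K,L}$ on the latter transfers directly to nonnegativity of $\mathcal{B}_{1;L,K}$ on the former; the origin condition is symmetric in $K$ and $L$. Part (ii) follows by the same reciprocal computation with the roles of $K$ and $L$ and of the indices $0,1$ swapped, using $s^{2}\mathcal{B}_{1;K,L}(1/s)=\mathcal{B}_{0;L,K}(s)$.

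For (iii) and (iv), I specialize Proposition~\ref{pro2.1} to $n=3$. For $i=0$ this gives
\[
\mathcal{B}_{0;K_t,L}(r+t) = \mathcal{B}_{0;K,L}(r) + t\,\mathcal{B}_{1;K,L}(r),
\]
and for $i=1$ it gives $\mathcal{B}_{1;K_t,L}(r+t) = \mathcal{B}_{1;K,L}(r)$. The support-function identity $h_{K_t}=h_K+t\,h_L$ yields $r_o(K_t,L)=r_o(K,L)+t$ and $R_o(K_t,L)=R_o(K,L)+t$, so the shift $s=r+t$ sends $[r_o(K,L), R_o(K,L)]$ onto $[r_o(K_t,L), R_o(K_t,L)]$. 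Part (iv) then follows immediately from the $i=1$ recursion, while (iii) follows from the $i=0$ recursion since both summands are nonnegative (the first by the $\mathfrak{R}_1$ hypothesis on $K$, the second by the $\mathfrak{R}_2$ hypothesis combined with $t\geq 0$). The origin interiority is preserved because $o\in\mathrm{int}(K)$ implies $o\in\mathrm{int}(K+tL)$, and $L$ is unchanged.

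The main obstacle is spotting the right reciprocal substitution for (i) and (ii): multiplying $\mathcal{B}_{0;K,L}(r)$ by $r^{-2}$ and recognizing the result, via the symmetry $W_i(K,L)=W_{3-i}(L,K)$, as a Bonnesen polynomial for the reversed pair. Everything else is bookkeeping with the recursion and the endpoint identities.
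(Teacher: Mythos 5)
Your proof is correct and follows the route the paper intends: the paper simply asserts that the corollary follows from Definition \ref{def2.2} and Proposition \ref{pro2.1}, and your argument supplies exactly the missing details --- the reciprocal identity $\mathcal{B}_{1;L,K}(s)=s^{2}\mathcal{B}_{0;K,L}(1/s)$ (via $W_i(K,L)=W_{3-i}(L,K)$ and $r_o(L,K)=1/R_o(K,L)$) for (i)--(ii), and the $n=3$ specializations of the recursion together with the endpoint shift $r_o(K_t,L)=r_o(K,L)+t$ for (iii)--(iv). All steps, including the preservation of the origin condition, check out.
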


\section{The log-Brunn-Minkowski inequality and the log-Minkowski inequality}\label{sec3}

In this section, inspired by the impressive work of B\"{o}r\"{o}czky-Lutwak-Yang-Zhang
\cite{B-L-Y-Z2012} and Ma \cite{M2015}, firstly,
we deal with the log-Brunn-Minkowski inequality and the log-Minkowski inequality
for two special convex bodies. Secondly, we obtain the $L_p$-Minkowski inequality
and the $L_p$-Brunn-Minkowski inequality for $0<p<1$.

\begin{lemma}\label{lem3.1}
Let $K,\,L\in\mathcal{K}^3$. If $K$ is in $\mathfrak{R}_1$ class
with respect to $L$, then
\begin{equation}\label{eqn3.1}
   \int_{S^{2}} \frac{h^2_K}{h_L}\mathrm{d}S_{K}\le\frac{3V(K)W_1(K,L)}{W_2(K,L)};
\end{equation}
If $K$ is $\mathfrak{R}_2$ class
with respect to $L$, then
\begin{equation}\label{eqn3.2}
   \int_{S^{2}} \frac{h^2_K}{h_L}\mathrm{d}S_{K}\le\frac{6V(K)W_2(K,L)-3W_1(K,L)^2}{V(L)},
\end{equation}
and equalities in \eqref{eqn3.1} and \eqref{eqn3.2} hold when $K$ and $L$ are dilates.
\end{lemma}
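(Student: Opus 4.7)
The plan is to turn the $\mathfrak{R}_i$-class condition into a \emph{pointwise} inequality on $S^2$ by evaluating the Bonnesen function at the sliding radius $r(u)=h_K(u)/h_L(u)$, and then integrate against the surface area measure $S_K$.

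The key observation is that \eqref{eqn2.2} and \eqref{eqn2.3} force $r(u)=h_K(u)/h_L(u)\in[r_o(K,L),R_o(K,L)]$ for every $u\in S^2$. Hence if $K$ is in $\mathfrak{R}_1$ class with respect to $L$, Definition \ref{def2.2} applied at $r(u)$ gives
\begin{equation*}
    2W_1(K,L)\,\frac{h_K(u)}{h_L(u)}-V(K)-W_2(K,L)\,\frac{h_K(u)^2}{h_L(u)^2}\ge 0,\qquad u\in S^2.
\end{equation*}
Multiplying by $h_L(u)>0$ and rearranging, I obtain
\begin{equation*}
    W_2(K,L)\,\frac{h_K(u)^2}{h_L(u)}\le 2W_1(K,L)\,h_K(u)-V(K)\,h_L(u).
\end{equation*}
Then I integrate both sides against $\mathrm{d}S_K(u)$ over $S^2$, and identify the three resulting integrals via \eqref{eqn2.1} and \eqref{eqn2.5}: $\int h_K\mathrm{d}S_K=3V(K)$ and $\int h_L\mathrm{d}S_K=3W_1(K,L)$. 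The right-hand side collapses to $6V(K)W_1(K,L)-3V(K)W_1(K,L)=3V(K)W_1(K,L)$, and dividing by $W_2(K,L)>0$ yields \eqref{eqn3.1}.

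The argument for \eqref{eqn3.2} is parallel. If $K$ is in $\mathfrak{R}_2$ class with respect to $L$, the condition $\mathcal{B}_{1;K,L}(r(u))\ge 0$ rearranges, after multiplying by $h_L(u)$, to
\begin{equation*}
    V(L)\,\frac{h_K(u)^2}{h_L(u)}\le 2W_2(K,L)\,h_K(u)-W_1(K,L)\,h_L(u).
\end{equation*}
Integrating against $\mathrm{d}S_K$ and again using $\int h_K\mathrm{d}S_K=3V(K)$, $\int h_L\mathrm{d}S_K=3W_1(K,L)$, gives $6V(K)W_2(K,L)-3W_1(K,L)^2$ on the right, and dividing by $V(L)>0$ yields \eqref{eqn3.2}. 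For the equality cases, when $K$ and $L$ are dilates, $h_K/h_L$ is a constant $c>0$, and a short computation using $\mathrm{d}S_K=c^2\mathrm{d}S_L$ shows that both sides of \eqref{eqn3.1} and \eqref{eqn3.2} equal $3c^4V(L)$.

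There is no genuine obstacle here once the reduction to a pointwise Bonnesen-type inequality is in place; the whole proof is driven by the fact that the pointwise ratio $h_K/h_L$ never exits the interval on which the class assumption guarantees nonnegativity of $\mathcal{B}_{i;K,L}$. The only thing that needs mild care is verifying that each appearance of $h_L(u)$ in the denominator is legitimate, which is immediate from $L\in\mathcal{K}^3$ together with $o\in\mathrm{int}(K\cap L)$ built into Definition \ref{def2.2}, so $h_L>0$ on $S^2$.
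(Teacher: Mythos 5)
Your proposal is correct and follows essentially the same route as the paper: evaluate $\mathcal{B}_{i;K,L}$ at the pointwise ratio $h_K(u)/h_L(u)$, which lies in $[r_o(K,L),R_o(K,L)]$ by \eqref{eqn2.2} and \eqref{eqn2.3}, multiply by $h_L$, integrate against $\mathrm{d}S_K$, and identify the resulting integrals via \eqref{eqn2.1}, \eqref{eqn2.5} and \eqref{eqn2.6}. You merely spell out the $\mathfrak{R}_2$ case and the dilate equality check, which the paper leaves as ``similarly'' and ``obvious.''
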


\begin{proof}
If $K$ is in $\mathfrak{R}_1$ class with respect to $L$, then
\begin{equation*}
    r_o(K,L)\le\frac{h_K(u)}{h_L(u)}\le R_o(K,L),
\end{equation*}
for all $u\in S^2$. Thus, from \eqref{eqn2.8} and Definition \ref{def2.2},
we have
\begin{equation*}
    2W_1(K,L)\frac{h_K(u)}{h_L(u)}-V(K)-W_2(K,L)\left(\frac{h_K(u)}{h_L(u)}\right)^2\ge 0.
\end{equation*}
Integrating this with respect to the measure $h_L \mathrm{d}S_{K}$, and using
\eqref{eqn2.1}, \eqref{eqn2.5} and \eqref{eqn2.6}, give us
\begin{align*}
    0\le& \int_{S^2} \left(2W_1(K,L)\frac{h_K(u)}{h_L(u)}-V(K)-W_2(K,L)\left(\frac{h_K(u)}{h_L(u)}\right)^2\right)
    h_L(u) \mathrm{d}S_{K}(u) \\
      =&3 V(K)W_1(K,L)-W_2(K,L)\int_{S^2} \frac{h_K(u)^2}{h_L(u)} \mathrm{d}S_{K}(u),
\end{align*}
which yields the desired inequality \eqref{eqn3.1}.

It is obvious that the equality in \eqref{eqn3.1} holds when $K$ and $L$ are dilates.

Similarly, inequality \eqref{eqn3.2} is obtained when
$K$ is in $\mathfrak{R}_2$ class with respect to $L$.
\end{proof}

\begin{theorem}\label{thm3.2}
Let $K,\,L\in\mathcal{K}^3$. If $K$ is in $\mathfrak{R}_1$ class
with respect to $L$, then
\begin{equation}\label{eqn3.3}
   \int_{S^2}\log\left(\frac{h_L}{h_K}\right) \mathrm{d}V_{K}\ge
   V(K)\log\left(\frac{V(L)}{V(K)}\right)^{\frac{1}{3}},
\end{equation}
with equality holds when $K$ and $L$ are dilates.
\end{theorem}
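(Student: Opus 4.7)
The plan is to combine Lemma \ref{lem3.1} with Jensen's inequality and the Aleksandrov--Fenchel (Minkowski) inequalities for mixed volumes. The key observation is that the integrand $\log(h_L/h_K)$ and the measure $\mathrm{d}V_K = \frac{1}{3}h_K\,\mathrm{d}S_K$ combine in a way that lets us replace $\int \frac{h_K^2}{h_L}\,\mathrm{d}S_K$ with a cone-volume average of $h_K/h_L$.

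First I would rewrite the conclusion of Lemma \ref{lem3.1} in its $\mathfrak{R}_1$ form using $\mathrm{d}V_K = \frac{1}{3}h_K\,\mathrm{d}S_K$:
\begin{equation*}
\int_{S^2}\frac{h_K}{h_L}\,\mathrm{d}V_K \;=\; \frac{1}{3}\int_{S^2}\frac{h_K^2}{h_L}\,\mathrm{d}S_K \;\le\; \frac{V(K)\,W_1(K,L)}{W_2(K,L)}.
\end{equation*}
Dividing by $V(K)$, this reads as an expectation bound with respect to the probability measure $\mathrm{d}\bar V_K$:
\begin{equation*}
\int_{S^2}\frac{h_K}{h_L}\,\mathrm{d}\bar V_K \;\le\; \frac{W_1(K,L)}{W_2(K,L)}.
\end{equation*}

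Next I would apply Jensen's inequality. Since $-\log$ is convex,
\begin{equation*}
\int_{S^2}\log\frac{h_L}{h_K}\,\mathrm{d}\bar V_K \;=\; -\int_{S^2}\log\frac{h_K}{h_L}\,\mathrm{d}\bar V_K \;\ge\; -\log\!\int_{S^2}\frac{h_K}{h_L}\,\mathrm{d}\bar V_K \;\ge\; \log\frac{W_2(K,L)}{W_1(K,L)}.
\end{equation*}
Multiplying by $V(K)$ yields
\begin{equation*}
\int_{S^2}\log\frac{h_L}{h_K}\,\mathrm{d}V_K \;\ge\; V(K)\log\frac{W_2(K,L)}{W_1(K,L)}.
\end{equation*}

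It then remains to show $W_2(K,L)/W_1(K,L)\ge(V(L)/V(K))^{1/3}$. This follows from two applications of the Aleksandrov--Fenchel inequality in $\mathbb{R}^3$: taking the common body to be $K$ gives $W_1(K,L)^2\ge V(K)\,W_2(K,L)$, and taking it to be $L$ gives $W_2(K,L)^2\ge W_1(K,L)\,V(L)$. Dividing the second by the first produces $(W_2/W_1)^3 \ge V(L)/V(K)$. Taking logarithms and combining with the previous display gives exactly \eqref{eqn3.3}. The equality assertion when $K$ and $L$ are dilates is immediate: both sides reduce to $V(K)\log\lambda$ for $L=\lambda K$.

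I expect the only subtle point to be locating the precise pair of Aleksandrov--Fenchel inequalities that give the ratio $V(L)/V(K)$ with the correct exponent $1/3$; the classical Minkowski second inequalities $W_1^3\ge V(K)^2V(L)$ and $W_2^3\ge V(K)V(L)^2$ on their own do not bound $W_2/W_1$ in the right direction, so one must divide the two quadratic AF inequalities above rather than use the cubic ones directly. Everything else is a routine application of Jensen's inequality and the integral identities \eqref{eqn2.1}, \eqref{eqn2.5}, \eqref{eqn2.6}.
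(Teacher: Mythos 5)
Your first two steps are sound: from \eqref{eqn3.1} one does get $\int_{S^2}(h_K/h_L)\,\mathrm{d}\bar V_K\le W_1(K,L)/W_2(K,L)$, and Jensen's inequality then yields $\int_{S^2}\log(h_L/h_K)\,\mathrm{d}V_K\ge V(K)\log\bigl(W_2(K,L)/W_1(K,L)\bigr)$. The proof breaks at the final step. You cannot divide the two Aleksandrov--Fenchel inequalities $W_2^2\ge W_1V(L)$ and $W_1^2\ge V(K)W_2$ ``the second by the first'': from $A\ge B$ and $C\ge D$ (all positive) one cannot conclude $A/C\ge B/D$; that would require the second inequality reversed. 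Worse, the target inequality $W_2(K,L)^3V(K)\ge W_1(K,L)^3V(L)$ is genuinely false, even under the $\mathfrak{R}_1$ hypothesis. Take $K$ an origin-centered cube of side $a$ with $a^3=4\pi/3$ and $L$ the unit ball, so $V(K)=V(L)$. By \eqref{eqn2.5} and \eqref{eqn2.6}, $W_1(K,L)=\tfrac13 S(K)=2a^2\approx 5.20$ and $W_2(K,L)=\tfrac13\int_{S^2}h_K\,\mathrm{d}u=\pi a\approx 5.06$, so $W_2/W_1<1=(V(L)/V(K))^{1/3}$. Yet $K$ \emph{is} in the $\mathfrak{R}_1$ class with respect to $L$: here $\mathcal{B}_{0;K,L}(r)=4a^2r-a^3-\pi a r^2$ is a concave quadratic taking the values $a^3(1-\pi/4)>0$ at $r_o=a/2$ and $a^3(2\sqrt3-1-3\pi/4)>0$ at $R_o=a\sqrt3/2$, hence is nonnegative on $[r_o,R_o]$. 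So your intermediate bound $V(K)\log(W_2/W_1)$ is strictly below the right-hand side of \eqref{eqn3.3} in this case, and the one-shot argument cannot close.

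The idea you are missing is the paper's interpolation along the family $L+tK$. The paper sets $F(t)=\int_{S^2}\log(h_{L+tK}/h_K)\,\mathrm{d}V_K-V(K)\log\bigl(V(L+tK)/V(K)\bigr)^{1/3}$, applies Lemma \ref{lem3.1} to the pair $(K,\,L+tK)$ (which is legitimate by Corollary \ref{cor2.3}) together with a \emph{single} Aleksandrov--Fenchel inequality $W_2(K,L+tK)^2\ge W_1(K,L+tK)\,V(L+tK)$ to show $F'(t)\le 0$, and then checks via the mean value theorem that $F(t)\to 0$ as $t\to\infty$, whence $F(0)\ge 0$, which is \eqref{eqn3.3}. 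The monotonicity in $t$ is precisely what substitutes for the false ratio inequality your argument requires at $t=0$.
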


\begin{proof}
Since $K$ is in $\mathfrak{R}_1$ class with respect to $L$,
from Corollary \ref{cor2.3} (i), (ii) and (iv), we know that
$K$ is in $\mathfrak{R}_1$ class with respect to $L+tK$.
Thus, from \eqref{eqn3.1} and the Aleksandrov-Fenchel
inequality, we get
\begin{align}
    \int_{S^{2}} \frac{h_K}{h_{L+tK}}\mathrm{d}V_{K}
    &\le\frac{V(K)W_1(K, L+tK)}{W_2(K, L+tK)}\nonumber\\
    &\le\frac{V(K)W_2(K, L+tK)}{V(L+tK)}
    =\frac{V(K)W_1(L+tK,K)}{V(L+tK)}.\label{eqn3.4}
\end{align}
Let
\begin{equation*}
    F(t)=\int_{S^{2}} \log\left(\frac{h_{L+tK}}{h_K}\right)\mathrm{d}V_{K}
    -V(K)\log\left(\frac{V(L+tK)}{V(K)}\right)^{\frac{1}{3}}.
\end{equation*}
Differentiating $F(t)$ with respect to $t$, we have
\begin{align*}
    F'(t)=&\frac{\mathrm{d}}{\mathrm{d}t}\left(\int_{S^2}\log\left(\frac{h_{L+tK}}{h_K}\right)\mathrm{d}V_K
    -V(K)\log\left(\frac{V(L+tK)}{V(K)}\right)^{\frac{1}{3}}\right)\\
     =&\frac{\mathrm{d}}{\mathrm{d}t}\Bigg(\int_{S^2}\log\left(\frac{h_{L}+t h_{K}}{h_K}\right)\mathrm{d}V_K\\
     &-V(K)\log\left(\frac{V(L)+3W_1(L,K)t+3W_2(L,K)t^2+V(K)t^3}{V(K)}\right)^{\frac{1}{3}}\Bigg)\\
     =&\int_{S^2} \frac{h_K}{h_L+t h_K} \mathrm{d}V_K-\frac{V(K)(W_1(L,K)+2W_2(L,K)t+V(K)t^2)}{V(L)+3W_1(L,K)t+3W_2(L,K)t^2+V(K)t^3}\\
     =&\int_{S^2} \frac{h_K}{h_{L+tK}} \mathrm{d}V_K-\frac{V(K)W_1(L+tK,K)}{V(L+tK)}.
\end{align*}
It follows from \eqref{eqn3.4} that $F(t)$ is decreasing on $[0,+\infty)$.
The desired result can be achieved if we show that $F(0)\ge F(t)\ge 0$
for $t\in [0,+\infty)$. Since
\begin{align*}
    F(t)&=\int_{S^{2}} \log\left(\frac{h_{L+tK}}{h_K}\right)\mathrm{d}V_{K}
    -\int_{S^2}\log\left(\frac{V(L+tK)}{V(K)}\right)^{\frac{1}{3}} \mathrm{d}V_{K}\\
    &=\int_{S^{2}}\log\left(\frac{h_{L+tK}}{h_K}\left(\frac{V(K)}{V(L+tK)}\right)^{\frac{1}{3}}\right)  \mathrm{d}V_{K},
\end{align*}
by the mean value theorem for integrals there exists $\mu\in S^2$ such that
\begin{equation*}
    F(t)=V(K)\log\left(\frac{h_L(\mu)+t h_K(\mu)}{h_K(\mu)}
    \left(\frac{V(K)}{V(L)+3W_1(L,K)t+3W_2(L,K)t^2+V(K)t^3}\right)^{\frac{1}{3}}\right).
\end{equation*}
Then
\begin{equation*}
    \lim_{t\rightarrow +\infty}F(t)=0,
\end{equation*}
which implies $F(0)\ge F(t)\ge 0$ for $t\in [0,+\infty)$.

It is obvious that the equality in \eqref{eqn3.3} holds
when $K$ and $L$ are dilates.
\end{proof}

\begin{theorem}\label{thm3.3}
Let $K,\,L\in\mathcal{K}^3$. If $K$ is in $\mathfrak{R}_1$ class
with respect to $L$, then, for $\lambda\in[0,1]$,
\begin{equation}\label{eqn3.5}
V((1-\lambda)\cdot K+_o \lambda \cdot L)\ge V(K)^{1-\lambda}V(L)^{\lambda}.
\end{equation}
When $\lambda\in (0,1)$, the equality in \eqref{eqn3.5} holds if
$K$ and $L$ are dilates.
\end{theorem}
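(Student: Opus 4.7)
The plan is to deduce Theorem \ref{thm3.3} from the log-Minkowski inequality (Theorem \ref{thm3.2}) via the standard Wulff-shape device used by B\"or\"oczky--Lutwak--Yang--Zhang in the planar case. Write $Q_\lambda=(1-\lambda)\cdot K+_o\lambda\cdot L$. By construction, $Q_\lambda$ is the Wulff shape of the function $f(u)=h_K(u)^{1-\lambda}h_L(u)^{\lambda}$, so $h_{Q_\lambda}\le f$ pointwise on $S^2$, and the standard Wulff-shape lemma gives $h_{Q_\lambda}(u)=f(u)$ for $S_{Q_\lambda}$-a.e.\ $u$. Since $dV_{Q_\lambda}=\tfrac{1}{3}h_{Q_\lambda}\,dS_{Q_\lambda}$, this identity persists $V_{Q_\lambda}$-a.e., yielding the key equation
\begin{equation*}
\int_{S^{2}}\log\frac{h_K^{1-\lambda}h_L^{\lambda}}{h_{Q_\lambda}}\,d\bar V_{Q_\lambda}=0.
\end{equation*}

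Next, I would invoke Theorem \ref{thm3.2} twice, once with $(Q_\lambda,K)$ and once with $(Q_\lambda,L)$ playing the role of $(K,L)$. Assuming the relevant class hypotheses hold (see below), this gives
\begin{equation*}
\int_{S^{2}}\log\frac{h_K}{h_{Q_\lambda}}\,d\bar V_{Q_\lambda}\ge\frac{1}{3}\log\frac{V(K)}{V(Q_\lambda)},\qquad
\int_{S^{2}}\log\frac{h_L}{h_{Q_\lambda}}\,d\bar V_{Q_\lambda}\ge\frac{1}{3}\log\frac{V(L)}{V(Q_\lambda)}.
\end{equation*}
Forming the convex combination with weights $1-\lambda$ and $\lambda$ collapses the left-hand sides into the integral of the key equation, which is zero, while the right-hand sides combine into $\tfrac{1}{3}\log\bigl(V(K)^{1-\lambda}V(L)^{\lambda}/V(Q_\lambda)\bigr)$. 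This quantity is therefore $\le 0$, which is precisely \eqref{eqn3.5}. Equality when $K$ and $L$ are dilates is immediate: if $K=cL$ then $h_{Q_\lambda}=c^{1-\lambda}h_L$, so $Q_\lambda=c^{1-\lambda}L$ and $V(Q_\lambda)=c^{3(1-\lambda)}V(L)=V(K)^{1-\lambda}V(L)^{\lambda}$.

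The main obstacle is making the two applications of Theorem \ref{thm3.2} rigorous, because each one demands that $Q_\lambda$ itself lie in $\mathfrak{R}_1$ (with respect to $K$ and to $L$, respectively), whereas the given hypothesis is only that $K\in\mathfrak{R}_1$ with respect to $L$. I would attempt to control the relative Bonnesen functions $\mathcal{B}_{0;Q_\lambda,K}$ and $\mathcal{B}_{0;Q_\lambda,L}$ through the transformation formula of Proposition \ref{pro2.1} and the closure properties recorded in Corollary \ref{cor2.3}, exploiting the endpoint cases $Q_0=K$, $Q_1=L$ and the monotone interpolation in $\lambda$. Failing a direct verification, a standard fallback is to first approximate $K$ and $L$ by smooth strictly convex bodies for which $\mathfrak{R}_1$ membership can be checked by a direct curvature computation, and then pass to the limit in \eqref{eqn3.5}.
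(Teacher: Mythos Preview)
Your approach is exactly the one the paper takes: its entire proof of Theorem~\ref{thm3.3} is the single sentence ``By the same method as \cite[Lemma~3.2]{B-L-Y-Z2012}, the log-Brunn--Minkowski inequality \eqref{eqn3.5} and the log-Minkowski inequality \eqref{eqn3.3} are equivalent,'' which is precisely the Wulff-shape argument you have written out.

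The obstacle you flagged is genuine, and the paper does not address it. In \cite{B-L-Y-Z2012} the equivalence is between the two \emph{universal} statements (``log-Minkowski holds for all origin-symmetric pairs'' $\Leftrightarrow$ ``log-BM holds for all origin-symmetric pairs''), and the argument works because the class of origin-symmetric bodies is closed under forming $Q_\lambda$, so the log-Minkowski inequality may legitimately be invoked for $(Q_\lambda,K)$ and $(Q_\lambda,L)$. Here, by contrast, Theorem~\ref{thm3.2} is only available under an $\mathfrak{R}_1$ hypothesis on the first body, and nothing in the paper establishes that $Q_\lambda$ lies in $\mathfrak{R}_1$ with respect to $K$ or $L$. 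Your proposed workarounds do not close this gap: Proposition~\ref{pro2.1} and Corollary~\ref{cor2.3} concern ordinary Minkowski outer parallel bodies $K+tL$, not the log-combination $Q_\lambda$, so they give no handle on $\mathcal{B}_{0;Q_\lambda,\cdot}$; and smooth approximation does not help, since smoothness alone does not imply $\mathfrak{R}_1$ membership. In short, you have faithfully reproduced the paper's argument and correctly identified a lacuna that the paper itself simply passes over.
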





\begin{proof}
By the same method as \cite[Lemma 3.2]{B-L-Y-Z2012}, we have
the log-Brunn-Minkowski inequality \eqref{eqn3.5} and the
log-Minkowski inequality \eqref{eqn3.3} are equivalent.
And it is clear that the equality in \eqref{eqn3.5} holds for $\lambda\in(0,1)$
when $K$ and $L$ are dilates.
\end{proof}

\begin{remark}\label{rem3.4}
Theorems \ref{thm3.2} and \ref{thm3.3} implies that
inequalities \eqref{eqn3.3} and \eqref{eqn3.5} hold when
$K$ is in $\mathfrak{R}_1$ class with respect to $L$,
and Saroglou \cite{S2015} proved that inequalities
\eqref{eqn3.3} and \eqref{eqn3.5} hold when
$K$ and $L$ are unconditional. Next, we show that
these two conditions don't include each other.

Let $K$ be the cube of edge $2$ and $L$ the rectangular
parallelepiped whose concurrent edges have length $2$,
$2$ and $4$. Suppose that $K$ and $L$ are
symmetric with respect to the origin. It is
obvious that $K$ and $L$ are unconditional
with respect to orthonormal basis.
By a simple computation, we have
$V(K)=8$, $V(L)=16$, $W_1(K,L)=\frac{32}{3}$,
$W_2(K,L)=\frac{40}{3}$, $r(K,L)=\frac{1}{2}$ and
$R(K,L)=1$, which implies that $K$ is not in $\mathfrak{R}_1$
class with respect to $L$.

From the proof of Proposition \ref{pro4.1},
we construct convex bodies such that they are
in $\mathfrak{R}_1$ class rather than 
unconditional with respect to orthonormal basis.

\end{remark}

As natural extensions of the log-Minkowski inequality
and the log-Brunn-Minkowski inequality, we have the
$L_p$-Minkowski inequality and the $L_p$-Brunn-Minkowski inequality
using the same method which is appeared in \cite{B-L-Y-Z2012}.

\begin{theorem}\label{thm3.5}
Let $K,\,L\in\mathcal{K}^3$ and $p\in (0,1)$.
If $K$ is in $\mathfrak{R}_1$ class with respect to $L$,
then
\begin{equation}\label{eqn3.6}
    \left(\int_{S^2}\left(\frac{h_L}{h_K}\right)^p \mathrm{d}\bar{V}_K\right)^{\frac{1}{p}}
    \ge \left(\frac{V(L)}{V(K)}\right)^{\frac{1}{3}},
\end{equation}
with equality holds if $K$ and $L$ are dilates.
\end{theorem}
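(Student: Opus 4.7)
The plan is to deduce the $L_p$-Minkowski inequality \eqref{eqn3.6} directly from the log-Minkowski inequality of Theorem \ref{thm3.2} by a single application of Jensen's inequality, exactly along the lines of the $L_p$ statement in \cite{B-L-Y-Z2012}. Since $K$ is assumed to be in $\mathfrak{R}_1$ class with respect to $L$, Theorem \ref{thm3.2}, after dividing by $V(K)$ and rewriting in terms of $\bar{V}_K$, yields
\begin{equation*}
\int_{S^2}\log\frac{h_L}{h_K}\,\mathrm{d}\bar{V}_K \ge \frac{1}{3}\log\frac{V(L)}{V(K)}.
\end{equation*}

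Next, for $p\in(0,1)$ the function $\varphi(t)=e^{pt}$ is strictly convex on $\mathbb{R}$, so Jensen's inequality applied to the probability measure $\bar{V}_K$ on $S^2$ with integrand $\log(h_L/h_K)$ gives
\begin{equation*}
\int_{S^2}\left(\frac{h_L}{h_K}\right)^{p}\mathrm{d}\bar{V}_K
= \int_{S^2} e^{p\log(h_L/h_K)}\,\mathrm{d}\bar{V}_K
\ge \exp\!\left(p\int_{S^2}\log\frac{h_L}{h_K}\,\mathrm{d}\bar{V}_K\right).
\end{equation*}
Combining the two displays and taking $p$-th roots (permissible since $p>0$) produces
\begin{equation*}
\left(\int_{S^2}\left(\frac{h_L}{h_K}\right)^{p}\mathrm{d}\bar{V}_K\right)^{\!1/p}
\ge \exp\!\left(\int_{S^2}\log\frac{h_L}{h_K}\,\mathrm{d}\bar{V}_K\right)
\ge \left(\frac{V(L)}{V(K)}\right)^{\!1/3},
\end{equation*}
which is the desired inequality \eqref{eqn3.6}.

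For the equality clause, both steps are transparent: Jensen's inequality becomes an equality precisely when $\log(h_L/h_K)$ is $\bar{V}_K$-almost everywhere constant on $S^2$, and by the equality discussion of Theorem \ref{thm3.2} this situation is realized when $K$ and $L$ are dilates, at which point direct substitution confirms equality throughout. I do not anticipate any real obstacle in carrying out this argument; the only point that requires attention is that the direction of Jensen's inequality is the correct one because $p>0$ makes $\varphi(t)=e^{pt}$ convex, so the analytic work has effectively been subsumed into the log-Minkowski inequality already established in Theorem \ref{thm3.2}.
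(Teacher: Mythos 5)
Your proposal is correct and follows exactly the paper's own route: the paper's proof is a one-line invocation of Jensen's inequality combined with the log-Minkowski inequality \eqref{eqn3.3}, and you have simply written out the details (convexity of $t\mapsto e^{pt}$, normalization by $V(K)$ to pass to $\bar{V}_K$, and the equality discussion). No discrepancy to report.
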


\begin{proof}
Jensen's inequality, together with the log-Minkowski inequality \eqref{eqn3.3},
shows that the $L_p$-Minkowski inequality \eqref{eqn3.6}, for $p>0$. When
$K$ and $L$ are dilates, the equality in \eqref{eqn3.6} holds.
\end{proof}

\begin{theorem}\label{thm3.6}
Let $K,\,L\in\mathcal{K}^3$ and $p\in (0,1)$.
If $K$ is in $\mathfrak{R}_1$ class with respect to $L$,
then
\begin{equation}\label{eqn3.7}
    V((1-\lambda)\cdot K+_p \lambda \cdot L)\ge V(K)^{1-\lambda}V(L)^{\lambda},
\end{equation}
with equality holds if $K=L$.
\end{theorem}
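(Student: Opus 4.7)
The plan is to deduce the $L_p$-Brunn-Minkowski inequality \eqref{eqn3.7} directly from the log-Brunn-Minkowski inequality of Theorem \ref{thm3.3}, using the elementary fact that for $p\in(0,1)$ the $p$-mean dominates the geometric mean, together with monotonicity of the Wulff-shape construction. This mirrors the passage from log-BM to $L_p$-BM carried out in \cite{B-L-Y-Z2012}.

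First, for any $p\in(0,1)$, $\lambda\in[0,1]$, and $u\in S^2$, the arithmetic--geometric mean inequality applied pointwise to the pair $h_K(u)^p,\,h_L(u)^p$ yields
$$\bigl((1-\lambda)h_K(u)^p+\lambda h_L(u)^p\bigr)^{1/p}\;\ge\;h_K(u)^{1-\lambda}h_L(u)^{\lambda}.$$
Since the Wulff shape associated with a pointwise larger function is itself larger (the defining half-space constraints are looser), this inequality translates into the inclusion
$$(1-\lambda)\cdot K+_p\lambda\cdot L\;\supseteq\;(1-\lambda)\cdot K+_o\lambda\cdot L.$$
Taking volumes and invoking Theorem \ref{thm3.3}, whose hypothesis ($K$ in $\mathfrak{R}_1$ class with respect to $L$) holds by assumption, one obtains
$$V\bigl((1-\lambda)\cdot K+_p\lambda\cdot L\bigr)\;\ge\;V\bigl((1-\lambda)\cdot K+_o\lambda\cdot L\bigr)\;\ge\;V(K)^{1-\lambda}V(L)^{\lambda}.$$
When $K=L$, both combinations collapse to $K$, so equality holds, settling the asserted equality case.

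There is essentially no obstacle: the proof is a purely formal deduction once the log-Brunn-Minkowski inequality is in hand. The only technical ingredient is the comparison of the $p$-mean with the geometric mean, which is a one-line application of Jensen's inequality to $\log$; the $\mathfrak{R}_1$ hypothesis enters only through its role in invoking Theorem \ref{thm3.3}. (Note that the more pedestrian route via Theorem \ref{thm3.5} applied to the pairs $(Q,K)$ and $(Q,L)$ with $Q=(1-\lambda)\cdot K+_p\lambda\cdot L$ would require verifying that $Q$ itself lies in $\mathfrak{R}_1$ class relative to both $K$ and $L$, which is not part of the hypothesis; the Wulff-shape inclusion argument above neatly sidesteps this issue.)
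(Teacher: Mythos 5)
Your proof is correct, but it takes a different route from the paper. The paper derives Theorem \ref{thm3.6} from the $L_p$-Minkowski inequality of Theorem \ref{thm3.5}, citing the equivalence argument of \cite[Lemma 3.1]{B-L-Y-Z2012}; you instead derive it directly from the log-Brunn-Minkowski inequality of Theorem \ref{thm3.3}, via the pointwise bound $\bigl((1-\lambda)h_K^p+\lambda h_L^p\bigr)^{1/p}\ge h_K^{1-\lambda}h_L^{\lambda}$ and the monotonicity of Wulff shapes, which gives the inclusion $(1-\lambda)\cdot K+_p\lambda\cdot L\supseteq(1-\lambda)\cdot K+_o\lambda\cdot L$. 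Your route is arguably the more robust of the two: the direction of the B\"{o}r\"{o}czky--Lutwak--Yang--Zhang equivalence that the paper needs (from $L_p$-Minkowski to $L_p$-Brunn-Minkowski) requires applying the $L_p$-Minkowski inequality with the body $Q=(1-\lambda)\cdot K+_p\lambda\cdot L$ in the role of the first argument, and hence needs $Q$ to be in $\mathfrak{R}_1$ class with respect to $K$ and to $L$ --- a hypothesis the paper does not verify. Your parenthetical remark identifies exactly this issue, and your Wulff-shape inclusion argument avoids it entirely, at the (harmless) cost of relying on Theorem \ref{thm3.3} rather than Theorem \ref{thm3.5}. The verification of the equality case for $K=L$ is the same trivial observation in both treatments.
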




\begin{proof}
Similar with \cite[Lemma 3.1]{B-L-Y-Z2012}, we have the $L_p$-Brunn-Minkowski inequality
\eqref{eqn3.7} and the $L_p$-Minkowski inequality \eqref{eqn3.6} are equivalent.
Meanwhile, it is obvious that the equality in \eqref{eqn3.7} holds when $K=L$.
\end{proof}

\section{The existence of the convex bodies that are in $\mathfrak{R}_i$ class}\label{sec4}

In this section, we show the existence of three-dimensional convex bodies that are in
$\mathfrak{R}_i$ class. 

\begin{proposition}\label{pro4.1}
There exist convex bodies $K_1,\,K_2\in\mathcal{K}_0^3$ such
that $K_1$ is not only in $\mathfrak{R}_1$ class but also in
$\mathfrak{R}_2$ class with respect to $K_2$.
\end{proposition}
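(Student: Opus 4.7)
My plan is to produce concrete pairs of convex bodies $K_1, K_2 \in \mathcal{K}_0^3$ simultaneously satisfying the defining inequalities of both the $\mathfrak{R}_1$ and $\mathfrak{R}_2$ classes. The simplest choice is any pair of dilates: if $K_1 = c\, K_2$ for some $c > 0$, then $h_{K_1}/h_{K_2} \equiv c$, so $r_o(K_1, K_2) = R_o(K_1, K_2) = c$ and the relevant interval $[r_o, R_o]$ degenerates to a single point. Using the homogeneity $W_i(K_1, K_2) = c^{3-i} V(K_2)$, direct substitution yields $\mathcal{B}_{0;K_1,K_2}(c) = \mathcal{B}_{1;K_1,K_2}(c) = 0$, so both conditions hold (with equality) on the one-point interval. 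This already proves Proposition \ref{pro4.1}.

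To additionally meet the stronger promise of Remark \ref{rem3.4}, namely to exhibit a body in the $\mathfrak{R}_1$ class that is not unconditional with respect to any orthonormal basis, I would perturb the trivial example. Set $K_2 = B$, the unit Euclidean ball at the origin, and $K_1 = B + \epsilon T$ for small $\epsilon > 0$, where $T$ is a regular tetrahedron centered at its centroid (placed at the origin). Since $T$ is not centrally symmetric and $B$ is strictly convex, the Minkowski cancellation law gives $-K_1 \neq K_1$; in particular, $K_1$ admits no central symmetry. But unconditionality with respect to some orthonormal basis $\{e_1, e_2, e_3\}$ would supply three mutually perpendicular reflective symmetries whose composition is the central inversion $-I$, forcing $-K_1 = K_1$. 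Therefore $K_1$ is not unconditional in any basis.

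To verify the two conditions for this $K_1$, I would expand the quermassintegrals $W_i(K_1, B)$ in powers of $\epsilon$ via the multilinearity of mixed volumes, substitute into the Bonnesen functions $\mathcal{B}_0$ and $\mathcal{B}_1$, and evaluate at the endpoints $r_o = 1 + \epsilon \min h_T$ and $R_o = 1 + \epsilon \max h_T$. A direct calculation shows that the $\epsilon^0$ and $\epsilon^1$ contributions cancel identically for \emph{both} $\mathcal{B}_0$ and $\mathcal{B}_1$, leaving a common second-order term proportional to $4\pi s(b(T) - s) - S(T)$ at each endpoint $s \in \{\min h_T, \max h_T\}$. Since $\mathcal{B}_0, \mathcal{B}_1$ are downward-opening parabolas in $r$, non-negativity at both endpoints propagates to the entire interval. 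The main technical step is precisely this arithmetic check: it is a quantitative strengthening of the Aleksandrov--Fenchel inequality $\pi b(T)^2 \geq S(T)$ at the extremal values of $h_T$, and the explicit formulas for the tetrahedron's inradius, circumradius, mean width, and surface area show the inequality holds with ample slack, so a sufficiently small $\epsilon > 0$ yields the required non-unconditional example.
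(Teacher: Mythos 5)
Your proposal is correct, but it proves the proposition by a genuinely different construction than the paper. The paper takes two bodies of revolution of constant width $1$, obtained by rotating planar constant-width domains with support functions $h_i(\theta)=\frac12+\frac{1}{12(n_i^2+n_i)}\sin(2n_i+1)\theta$ about the $y$-axis; the constant-width property collapses $W_1(K_1,K_2)$ and $W_2(K_1,K_2)$ to $S(K_1)/6$ and $S(K_2)/6$, the Blaschke identity gives the volumes, and the positivity of $\mathcal{B}_0$ and $\mathcal{B}_1$ at the endpoints $r_o,R_o$ is then checked \emph{numerically} for specific $n_1,n_2$ (like you, the authors rely implicitly on $\mathcal{B}_i$ being a downward parabola in $r$ to pass from the endpoints to the whole interval). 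Your route has two parts. The degenerate dilate observation does settle the literal existence statement, since $[r_o,R_o]$ is a point and $\mathcal{B}_0(c)=\mathcal{B}_1(c)=0$; but it is worth saying explicitly that this trivializes the proposition — the point of Section 4 (see Remark \ref{rem3.4}) is to produce pairs to which Theorems \ref{thm3.2}--\ref{thm3.6} apply nontrivially, and for dilates every inequality in the paper is an identity. Your perturbative example $K_1=B+\epsilon T$, $K_2=B$ is the substantive contribution: I checked the expansion and your cancellation claim is right — writing $a=W_2(T,B)$, $b=S(T)/3$, $v=4\pi/3$, one gets $\mathcal{B}_1(1+\epsilon s)=\epsilon^2(2as-b-vs^2)=\frac{\epsilon^2}{3}\bigl(4\pi s(\bar b(T)-s)-S(T)\bigr)$ exactly, while $\mathcal{B}_0(1+\epsilon s)$ has the same $\epsilon^2$ term plus an extra $\epsilon^3(2bs-c-as^2)$ term (with $c=V(T)$) that you should mention, since it is negative at $s=\max h_T$ and must be absorbed by taking $\epsilon$ small. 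The key inequality does hold at both endpoints ($\approx 1.816$ vs.\ $1.732$ at $s=\rho$ and $\approx 2.308$ vs.\ $1.732$ at $s=R$, using $\bar b(T)=\tfrac{3}{\pi}\arccos(-\tfrac13)/2\approx 0.9123$ for unit edge), though at the inradius endpoint the margin is under $5\%$, so ``ample slack'' oversells it and that single arithmetic check is the load-bearing step you must actually display. What your approach buys is a cleaner, essentially closed-form verification (one explicit inequality for the regular tetrahedron) plus, via the asymmetry of $T$, the non-unconditional example promised in Remark \ref{rem3.4}; what the paper's construction buys is a family of examples far from the ball, at the cost of an unverified appeal to ``numerical computation.''
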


\begin{proof}
We construct some revolutionary constant width convex bodies to
satisfy these conditions. Let $D_i$ be a domain with support function
\begin{equation}\label{eqn4.1}
    h_i(\theta)=\frac{1}{2}+\frac{1}{12(n_i^2+n_i)}\sin(2n_i+1)\theta,
    \quad i=1,\,2.
\end{equation}
Since $h_i(\theta)+h_i(\theta+\pi)=1$
and $h_i(\theta)+h_i''(\theta)=\frac{1}{2}-\frac{1}{3}\sin(2n_i+1)\theta>0$,
$D_i$ is a planar convex body of constant width $1$ and symmetric with
respect to the $y$-axis (see Figure \ref{pic1-1}).
Denote by $K_i$ the convex body by rotating $D_i$ around the $y$-axis (see Figure \ref{pic1-2}).
It follows from \cite[Proposition 3.4]{P-Z-Y2016} that $K_i$ is a revolutionary body
of constant width $1$, which together with \eqref{eqn2.5} and \eqref{eqn2.6} implies
\begin{equation*}
    W_1(K_1,K_2)=\frac{S(K_1)}{6}\quad \text{and}\quad W_2(K_1,K_2)=\frac{S(K_2)}{6},
\end{equation*}
where $S(K_i)$ is the surface area of $K_i$.
The surface area of $K_i$ can be expressed by (see \cite[p.1086 (28)]{B-H2012})
\begin{equation}\label{eqn4.2}
    S(K_i)=2\pi\int_{-\frac{\pi}{2}}^{\frac{\pi}{2}}
    \left(h_i(\theta)^2-\frac{1}{2}h_i'(\theta)^2\right)\cos\theta\mathrm{d}\theta.
\end{equation}
By the Blaschke identity (cf. \cite{B1915} or \cite[Theorem 4]{C1966})
\begin{equation}\label{eqn4.3}
    2V(K)=w S(K)-\frac{2\pi}{3}w^3,
\end{equation}
where $w$ is the width of $K$, we can get the volume of $K_1$ and $K_2$.

In order to prove that $K_1$ is in $\mathfrak{R}_1$ class
as well as in $\mathfrak{R}_2$ class with respect to $K_2$,
we have to show that the origin $o$ satisfies the conditions
of Definition \ref{def2.2}. Since $r_o(D_1,D_2)=r_o(K_1,K_2)$
and $R_o(D_1,D_2)=R_o(K_1,K_2)$, we need only to
ensure the value of $r_o(D_1,D_2)$ and $R_o(D_1,D_2)$.

According to \eqref{eqn2.2}, \eqref{eqn2.3}, \eqref{eqn4.1},
\eqref{eqn4.2} and \eqref{eqn4.3}, we can do some numerical computation.
These results show that $\mathcal{B}_i(r_o(D_1,D_2))>0$
and $\mathcal{B}_i(R_o(D_1,D_2))>0$ for $i=0,\,1$,
when $n_i$ chooses different values, such as $n_1=1,\,n_2=2$; $n_1=3,\,n_2=5$; etc..
\end{proof}

\begin{figure}
\begin{center}
\subfloat[$D_i$ \label{pic1-1}]{{\includegraphics[width=0.38\textwidth]{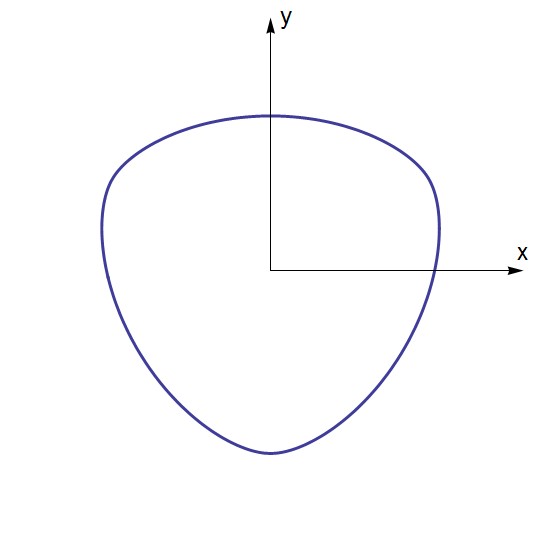}}}\qquad
\subfloat[$K_i$ \label{pic1-2}]{{\includegraphics[width=0.5\textwidth]{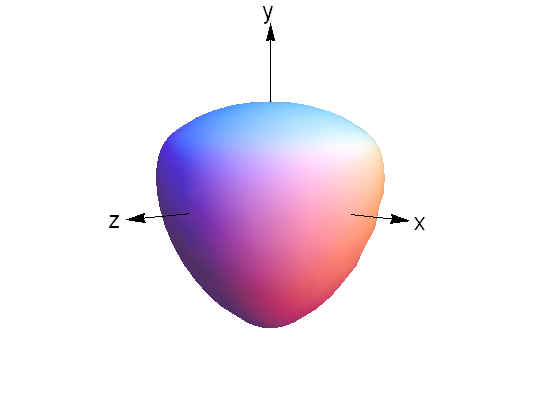}}}
\caption{Constant width domain $D_i$ and its revolutionary body $K_i$
\label{pic1}}
\end{center}
\end{figure}

Next, we will give an example of pairs of three-dimensional origin symmetric convex bodies
that are in $\mathfrak{R}_1$ class.

\begin{example}\label{ex4.2}
Let $K$ be a cylinder of height $2$ and radius $1$ and $L$ a unit ball. Suppose that $K$ and $L$ are symmetric with
respect to the origin. From \cite[1(d) p230]{S2004}, it follows that
$W_0(K,L)=2\pi$, $W_1(K,L)=2\pi$ and $W_2(K,L)=\frac{\pi(\pi+2)}{3}$.
It is clear that $r_o(K,L)=r(K,L)=1$ and $R_o(K,L)=R(K,L)=\sqrt{2}$.
Some simple computations show that
$K$ is in $\mathfrak{R}_1$ class with respect to $L$.
\end{example}

\section*{Acknowledgements}
We are grateful to the anonymous referee for his or her careful
reading of the original manuscript of this paper and giving us some
invaluable comments.

\bibliographystyle{amsplain}

\end{document}